\newtheorem{theorem}{Theorem}
\newtheorem{lemma}{Lemma}
\theoremstyle{definition}
\newtheorem{definition}{Definition}
\newcommand{\subdub}[1]{\operatorname{SDD}(#1)}
\author{David Eppstein\thanks{Department of Computer Science, University of California, Irvine. This research was supported in part by the National Science Foundation under grant CCF-2212129.}}
\title{Hamiltonian Cycles in Subdivided Doubles}
\date{ }
\begin{document}
\maketitle  


\begin{abstract}
The subdivided double construction on 4-regular graphs was used by Potočnik and Wilson to explore semi-symmetric (edge-transitive but not vertex-transitive) graphs, and can be used to construct every semi-symmetric 4-regular graph that contains a pair of twin vertices. We show that (regardless of symmetry) subdivided doubles have another curious property: they have exponentially many Hamiltonian cycles each of which is complementary to another Hamiltonian cycle.
\end{abstract}


\section{Introduction}
In a 4-regular graph, the complement of a Hamiltonian cycle is another 2-regular spanning subgraph, which might or might not be another Hamiltonian cycle. In $K_5$, for instance, the only 2-regular spanning subgraphs are 5-cycles, so each Hamiltonian cycle is complementary to another one. In the octahedral graph $K_{2,2,2}$, however,  some Hamiltonian cycles are complementary to pairs of disjoint triangles. Because there is no reason to expect any special structure such as connectedness in the complements of Hamiltonian cycles, we expect the behavior of $K_{2,2,2}$ to be the more typical case for large Hamiltonian 4-regular graphs. Potočnik and Wilson used \emph{subdivided doubles}, a method for transforming 4-regular graphs into larger 4-regular graphs, to analyze the structure of semi-symmetric graphs (the regular and edge-transitive but not vertex-transitive graphs)~\cite{PotWil-JCTB-07,PotWil-AMC-14}. This construction produces graphs having many Hamiltonian cycles (some of them coming from Euler tours of the underlying graph). In this paper we prove that all subdivided doubles share with $K_5$ the property that every Hamiltonian cycle is complementary to another Hamiltonian cycle. For our convenience, we give this property a name:

\begin{definition}
A 4-regular graph is \emph{Hamiltonian-paired} if it is Hamiltonian and the complement of every Hamiltonian cycle is another Hamiltonian cycle.
\end{definition}

With this definition established, we can express our main result succinctly: every subdivided double is Hamiltonian-paired.

When a 4-regular graph has two complementary Hamiltonian cycles, they form a \emph{Hamiltonian decomposition}.
Every 4-regular graph has an even number of Hamiltonian decompositions~\cite{Tho-ADM-78}.
A line graph of a cubic graph has a Hamiltonian decomposition if and only if the underlying cubic graph has a Hamiltonian cycle~\cite{Kot-CPPM-57,Mar-AM-76}. Because testing Hamiltonicity of cubic planar graphs is $\mathsf{NP}$-complete~\cite{GarJohSto-TCS-76}, it is also $\mathsf{NP}$-complete to test the existence of a Hamiltonian decomposition in the line graph of a cubic planar graph, and more generally in any 4-regular planar graph. These remarks also resolve a 2012 conjecture of Cygan et al.~\cite{CygHouKow-JGT-12} on the complexity of linear arboricity. A \emph{linear forest} is a union of vertex-disjoint paths, and the \emph{linear arboricity} of a graph is the minimum number of linear forests into which the edges of the graph can be partitioned. If a 4-regular graph has a Hamiltonian decomposition, then removing an arbitrary vertex turns its two Hamiltonian cycles into two Hamiltonian paths, showing that the graph with the removed vertex has linear arboricity two. On the other hand, if a 4-regular graph does not have a Hamiltonian decomposition, then removing a single vertex cannot produce a graph with linear arboricity two, because two linear forests that are not both Hamiltonian paths do not have enough edges to cover all edges of the resulting graph, and a cover by two Hamiltonian paths would necessarily terminate the paths at the four odd-degree vertices left as the neighbors of the removed vertex, giving a Hamiltonian decomposition. Because it is $\mathsf{NP}$-complete to test the existence of a Hamiltonian decomposition in a 4-regular planar graph, it is $\mathsf{NP}$-complete to test whether the subgraph obtained by removing any single vertex has linear arboricity two. Therefore, more generally, it is $\mathsf{NP}$-complete to test whether a planar graph has linear arboricity two, as was conjectured by Cygan et al. Almost all 4-regular graphs, and almost all 4-regular bipartite graphs, have Hamiltonian decompositions~\cite{FriJerMol-JAlg-96,KimWor-JCTB-01,GreKimWor-JCTB-04} Our results show that, despite the hardness of finding Hamiltonian decompositions in arbitrary 4-regular graphs, they are easy to find in subdivided doubles.

\begin{figure}[b!]
\centering\hrule\medskip
\includegraphics[width=\textwidth]{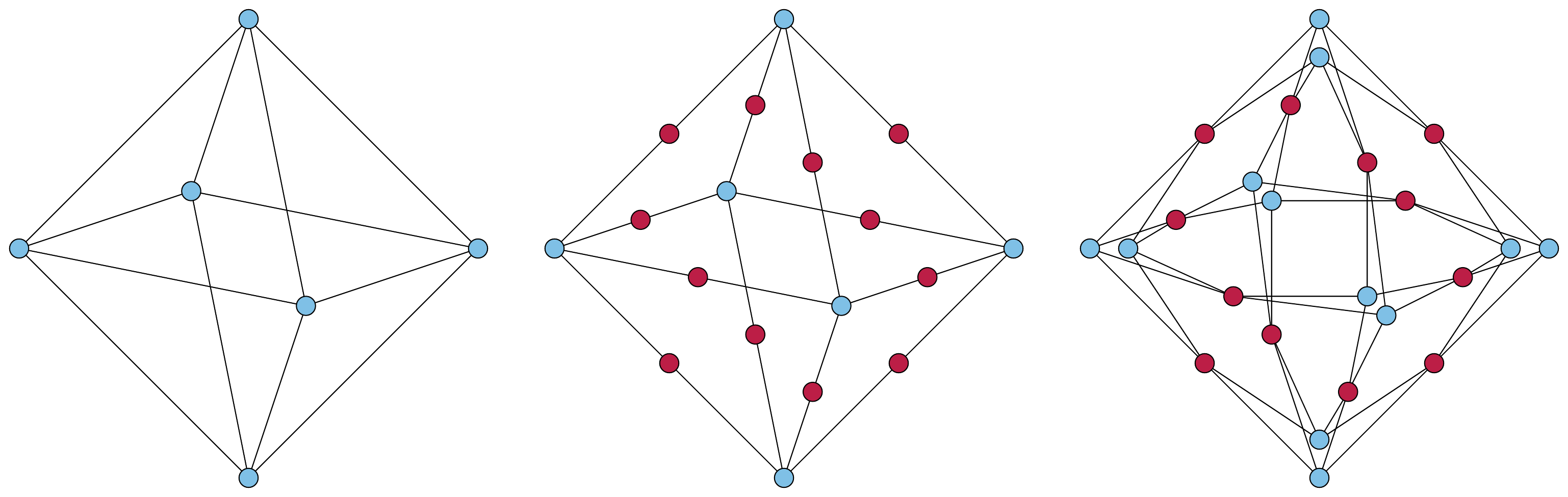}
\caption{The two steps of the subdivided double construction, applied to the octahedral graph $K_{2,2,2}$ (left): subdivide each edge (center), and then double each degree-four vertex of the result to produce the subdivided double $\subdub{K_{2,2,2}}$ (right).}
\label{fig:subdub-octahedron}
\end{figure}

\section{Subdivided doubles}

We begin by reviewing the definition of a subdivided double from Potočnik and Wilson~\cite{PotWil-JCTB-07,PotWil-AMC-14}. Following the notation of Potočnik, Verret, and Wilson~\cite{PotVerWil-DAM-21}, we denote by $\subdub{G}$ the subdivided double of any 4-regular graph $G$. 

\begin{definition}
\label{def:subdub}
$\subdub{G}$ is constructed as follows. First, subdivide each edge of $G$ into a two-edge path, producing a bipartite biregular graph with degree-four vertices on one side of the bipartition (the original vertices of $G$) and degree-two vertices on the other side (the subdivision vertices). Then, replace each degree-four vertex by a pair of twin vertices, not adjacent to each other but sharing the same four neighbors. The resulting graph is 4-regular, as the replacement of each degree-four vertex by twins doubles the degree of the subdivision vertices from two to four, while not changing the degree of the replaced twin vertices.
\end{definition}

\cref{fig:subdub-octahedron} shows the application of this process to the octahedral graph $K_{2,2,2}$, producing its subdivided double $\subdub{K_{2,2,2}}$.

\begin{figure}[b!]
\centering\hrule\medskip
\includegraphics[scale=0.35]{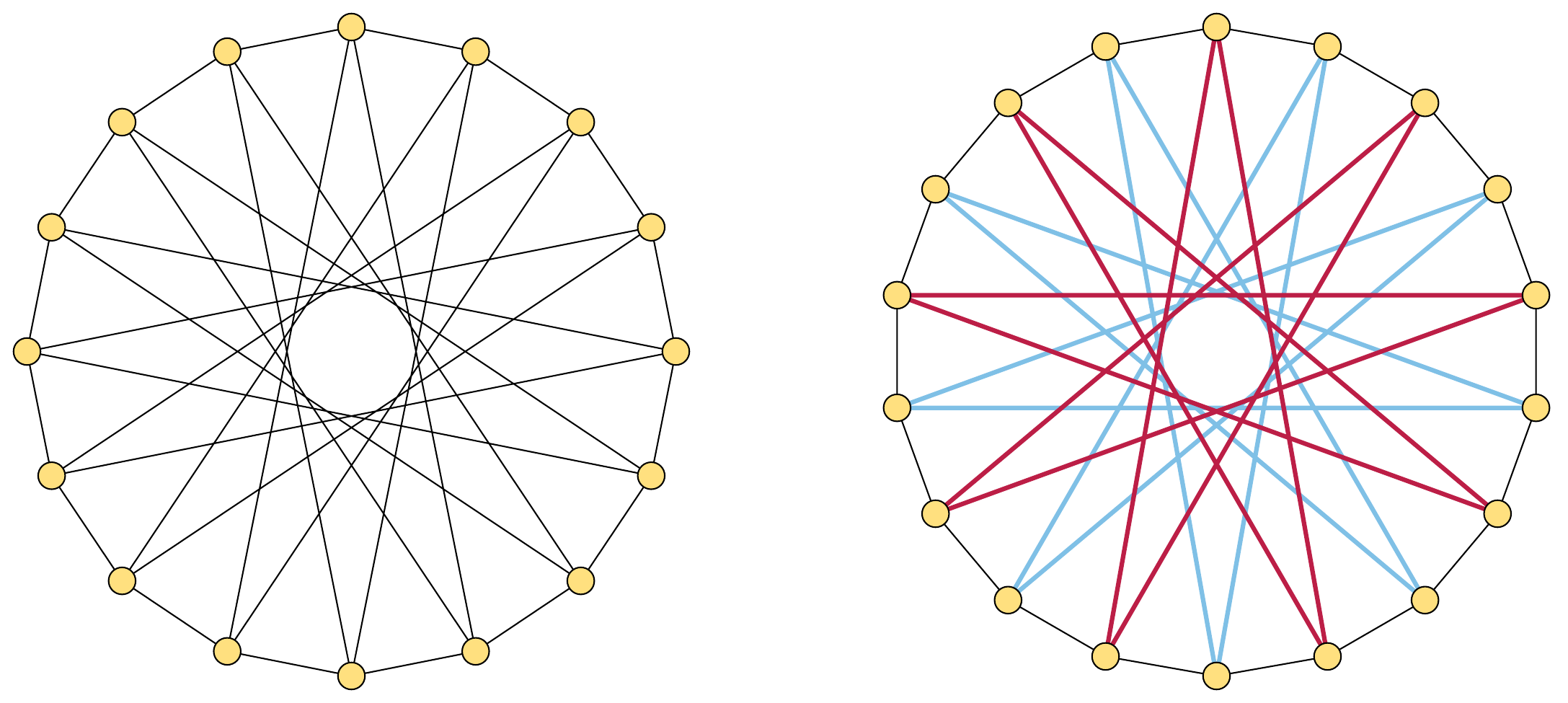}
\caption{The circulant graph $C_{16}^{1,7}$ formed as the subdivided double of a 4-cycle with doubled edges (left) is Hamiltonian-paired; the outer polygon and inner star polygon form a Hamiltonian decomposition. The circulant graph $C_{18}^{1,8}$ (right) is not Hamiltonian-paired, despite having the same local structure; the outer polygon is a Hamiltonian cycle but its complement is a pair of cycles (red and blue) rather than a single Hamiltonian cycle.}
\label{fig:circulant}
\end{figure}

As Potočnik and Wilson observe, the subdivided double of any arc-transitive simple graph is edge-transitive, and every 4-regular semi-symmetric graph with a pair of twin vertices arises in this way. In particular, the Folkman graph (\cref{fig:euler2ham,fig:folkman}), the smallest semi-symmetric graph, is the subdivided double $\subdub{K_5}$ of $K_5$~\cite{PotWil-JCTB-07,PotWil-AMC-14}. It is also possible to apply the subdivided double construction to 4-regular multigraphs as well as to simple graphs. In particular:
\begin{itemize}
\item If $D_n$ denotes the two-vertex \emph{dipole multigraph} with $n$ edges connecting its two vertices, then $\subdub{D_4}=K_{4,4}$.
\item If $\bar{\bar C}_n$ denotes the multigraph formed by doubling each edge in an $n$-vertex cycle, then $\subdub{\bar{\bar C}_n}$ is a circulant graph $C_{4n}^{1,2n-1}$, formed from a cyclic sequence of $4n$ vertices by connecting pairs of vertices that are one or $2n-1$ step apart in the sequence. The twin of each vertex is the vertex opposite it in the cyclic sequence. \cref{fig:circulant} depicts the circulant $C_{16}^{1,7}$ arising from the case $n=4$. (These graphs can also be constructed as the lexicographic product $C_{2n}\cdot 2K_1$ of a $2n$-vertex cycle graph and a two-vertex edgeless graph.)
\end{itemize}
Our results on subdivided doubles apply to the subdivided doubles of multigraphs, and not only to the subdivided doubles of simple graphs.

It is easy to recognize the subdivided doubles algorithmically. They are the 4-regular bipartite graphs in which the vertices of one side of the bipartition can be paired up into twins. When this is the case, the underlying graph of which it is the subdivided double can be obtained by condensing each pair of twins on that side of the bipartition into a single vertex, which causes all of their neighbors to have degree two, and then contracting the resulting paths through these neighbors into single edges.

\section{Hamiltonicity}
To avoid trivial cases, our definition of Hamiltonian-paired graphs requires these graphs to be Hamiltonian. To this end, we now show that subdivided doubles have many Hamiltonian cycles.

\begin{theorem}
\label{thm:many-ham}
Let $G$ be a connected 4-regular multigraph with $n$ vertices. Then $\subdub{G}$ has at least $2^n$ Hamiltonian cycles.
\end{theorem}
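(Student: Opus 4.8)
The plan is to build Hamiltonian cycles of $\subdub{G}$ out of a single Euler tour of $G$, exploiting the freedom in how we route each pass of the tour through the two twins available at each original vertex.

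First I would invoke Eulerianity: since $G$ is connected and every vertex has the even degree $4$, it admits an Euler tour, a closed walk $v_0, e_1, v_1, e_2, \ldots, e_{2n}, v_{2n}=v_0$ traversing each of its $2n$ edges exactly once, where $e_i$ joins $v_{i-1}$ to $v_i$. Because each vertex has degree $4$, each vertex of $G$ occurs exactly twice among $v_0,\ldots,v_{2n-1}$; that is, the tour makes exactly two passes through each vertex.

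Next I would transcribe the tour into $\subdub{G}$. Replace each traversed edge $e_i$ by its subdivision vertex $s_i$, and replace each vertex occurrence $v_i$ by one of the two twins of $v_i$, which I call $t_i$. Since $s_i$ is the subdivision vertex of an edge incident to both $v_{i-1}$ and $v_i$, it is adjacent in $\subdub{G}$ to both twins of $v_{i-1}$ and both twins of $v_i$, so the cyclic sequence $t_0, s_1, t_1, s_2, t_2, \ldots, s_{2n}, t_0$ is a closed walk in $\subdub{G}$ regardless of the twin choices. The crucial step is to assign, at each vertex $v$ of $G$, its two distinct twins to its two passes; there are two ways to do this at each vertex, hence $2^n$ assignments overall. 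Under any such assignment the closed walk uses each subdivision vertex once (each edge occurs once in the tour) and each twin once (the two twins of each vertex are split between its two passes), so it visits all $4n$ vertices of $\subdub{G}$ exactly once and is therefore a Hamiltonian cycle.

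Finally I would argue that distinct assignments yield distinct Hamiltonian cycles, so that all $2^n$ of them are genuinely different. If two assignments first differ at position $i$, placing different twins $t_i \neq t_i'$ there, then the edge $\{s_i, t_i\}$ lies in the first cycle but not the second: the two cycle-neighbors of $s_i$ are a twin of $v_{i-1}$ and a twin of $v_i$, and provided $e_i$ is not a loop these are twins of different vertices and so cannot be interchanged. I expect this distinctness step, together with the bookkeeping needed to make it robust for multigraphs (where parallel edges and, in the presence of loops, adjacent passes at a single vertex must be tracked with care), to be the main obstacle; by contrast the existence of the cycles and their single-cycle structure follow immediately from the fact that the Euler tour is a single closed walk.
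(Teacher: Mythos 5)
Your proposal is correct and is essentially the paper's own proof: fix one Euler tour of $G$, make one of two twin choices at each of the $n$ vertices, transcribe the tour through the subdivision vertices of $\subdub{G}$, and show the $2^n$ assignments yield distinct Hamiltonian cycles. The only difference is cosmetic---the paper establishes distinctness by uniquely reconstructing the twin assignment from the two-edge path the cycle takes through each subdivision vertex, which amounts to your first-difference argument, and the loop complication you flag as the main obstacle in fact dissolves: since each per-vertex assignment is a bijection, the two (consecutive) occurrences of a looped vertex are swapped together, so at the first position where two assignments differ the edge $e_i$ cannot be a loop and your argument applies as stated.
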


\begin{figure}[b!]
\centering\hrule\medskip
\includegraphics[width=\textwidth]{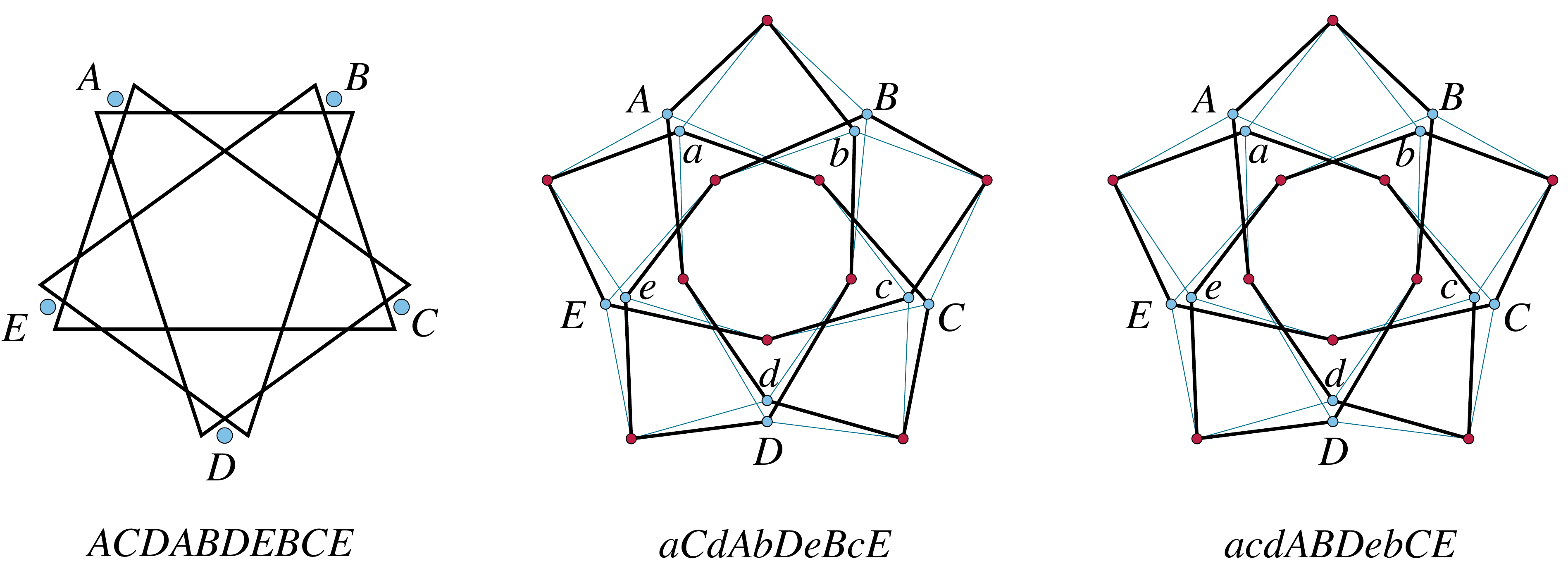}
\caption{Left: An Euler tour in $K_5$. Center and right: the subdivided double of $K_5$ (the Folkman graph),
showing two out of the 32 ways of replacing each occurrence of a vertex in the Euler tour with one of its two copies in the Folkman graph, and the two resulting Hamiltonian cycles in the Folkman graph.}
\label{fig:euler2ham}
\end{figure}

\begin{proof}
We describe an algorithm for constructing a Hamiltonian cycle in $G$, based on certain arbitrary choices, and prove that the stated number of distinct choices lead to distinct cycles. To construct a Hamiltonian cycle, perform the following steps:
\begin{itemize}
\item As is well known, every connected multigraph with even vertex degrees has an Euler tour. Choose $T$ to be any Euler tour of $G$. In a simple graph, this tour can be represented by the cyclic order in which it visits the vertices; for example, \cref{fig:euler2ham} (left) shows an Euler tour $T$ in the complete graph $K_5$ that visits the vertices in the order $ACDABDEBCE$. In a multigraph, we additionally specify which edge of $G$ connects each consecutive pair of vertices in the cyclic order.
\item Because $G$ is 4-regular, $T$ must pass through each vertex of $G$ exactly twice. Choose arbitrarily a bijection between these two visits of each vertex in $T$, and the two twin copies of the same vertex in $\subdub{G}$. This bijection can be represented by replacing each vertex of $G$, in the cyclic order representing the Euler tour, by one of its two copies in $\subdub{G}$. Two choices of this bijection are shown in \cref{fig:euler2ham} (center and right), each labeling the two copies of each vertex in upper or lower case and using the same case convention for the strings representing the cyclic ordering of vertices.
\item Form a walk in $\subdub{G}$ that visits the vertices of the resulting cyclic order in this order, by replacing each edge $e$ in tour $T$ by a two-edge path in $\subdub{G}$ through the subdivision vertex corresponding to $e$. (\cref{fig:euler2ham} illustrates this construction for each of the two chosen bijections.)
\end{itemize}
The walk resulting from this construction visits each subdivision vertex once, because the Euler tour $T$ from which it was constructed visits each edge once. It visits each twin vertex in $\subdub{G}$ once, because each twin in $\subdub{G}$ corresponds to one of the two visits of $T$ to the corresponding vertex in $G$. As a walk that visits each vertex once, it must be a Hamiltonian cycle.

For each vertex in $G$, there are two choices for the bijection between its two visits in $T$ and its two twin copies in $\subdub{G}$. Thus, the second step of the construction, in which we choose this bijection, has a total of $2^n$ choices. For any fixed choice of $T$, the bijection chosen in the second step can be uniquely reconstructed from the resulting Hamiltonian cycle, by examining for each edge of $G$ which of the six possible two-edge paths the tour follows through the subdivision vertex of that edge. Therefore, once $T$ is chosen, each of the $2^n$ choices for the bijection leads to a distinct Hamiltonian cycle.
\end{proof}

In many cases, the number of Hamiltonian cycles will be significantly larger than the bound in \cref{thm:many-ham}, for two reasons. First, $G$ will generally have many Euler tours, and the proof of the theorem uses only one of them. And second, not every Hamiltonian cycle of $\subdub{G}$ may come from an Euler tour in this way. The following definition and lemma examine the structure of Hamiltonian cycles that do not come from Euler tours.

\begin{definition}
Let $H$ be a Hamiltonian cycle in $\subdub{G}$. We define a \emph{hairpin} in $H$  to be a path of two consecutive edges in $H$ whose endpoints were constructed as twins in $\subdub{G}$.
\end{definition}

\begin{lemma}
Let $G$ be a graph or multigraph without self-loops. Then a Hamiltonian cycle in $\subdub{G}$ comes from an Euler tour in $G$, using the construction in the proof of \cref{thm:many-ham}, if and only if it has no hairpin.
\end{lemma}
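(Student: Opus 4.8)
The plan is to prove both directions by analyzing how $H$ meets a single subdivision vertex and then projecting $H$ back down to $G$. The structural fact I would record first is that $\subdub{G}$ is bipartite, with the twin vertices on one side and the subdivision vertices on the other, so $H$ alternates between the two sides, and each subdivision vertex $s_e$ (for an edge $e=uv$ of $G$) meets $H$ in exactly two of its four neighbors $\{u_1,u_2,v_1,v_2\}$. Here the no-self-loops hypothesis is exactly what guarantees $u\neq v$, so that these four neighbors really are two copies of $u$ and two copies of $v$; this is the reason for excluding self-loops, since a self-loop's subdivision vertex is adjacent only to copies of a single vertex and so any passage through it would look like a hairpin even though it corresponds to a genuine edge traversal.

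For the forward (only-if) direction I would simply inspect the construction in the proof of \cref{thm:many-ham}: each edge $e=uv$ of the Euler tour $T$ is replaced by a two-edge path (a copy of $u$)--$s_e$--(a copy of $v$), so the two $H$-edges at $s_e$ land on copies of two distinct vertices and never on a pair of twins. Hence any Hamiltonian cycle arising from an Euler tour has no hairpin, and this direction is immediate.

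The substance is the converse. Assuming $H$ has no hairpin, at every subdivision vertex $s_e$ the two incident $H$-edges must go to one copy of $u$ and one copy of $v$, since the only alternative --- two copies of the same vertex --- is precisely a hairpin. I would then define the projection $\pi$ sending each twin copy $v_i$ to $v$ and each subdivision vertex $s_e$ to the edge $e$ of $G$ it represents. Traversing $H$ and applying $\pi$ yields a closed walk $T$ in $G$: consecutive $H$-edges at a twin $v_i$ go to subdivision vertices of two edges incident to $v$, so the projected walk is a legitimate walk in $G$, and the no-hairpin condition ensures that each passage through $s_e$ reads as a genuine traversal of $e$ from $u$ to $v$. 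Because $H$ is a single cycle visiting each subdivision vertex exactly once, $T$ is a single closed walk traversing every edge of $G$ exactly once, that is, an Euler tour.

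Finally I would verify that $H$ is recovered from $T$ by the construction. The two twin copies $v_1,v_2$ are each visited once by $H$ and project onto the two visits of $T$ to $v$, which furnishes the required bijection; reconstructing from $T$ with this bijection reinserts exactly the subdivision vertices and twin copies that $H$ already uses, so the reconstructed cycle is $H$ itself. The main obstacle I anticipate is the bookkeeping in this last step --- making precise that the projection is a single connected closed walk rather than several edge-disjoint ones (which follows from $H$ being one cycle), and that running the construction on $T$ with the induced bijection returns $H$ verbatim --- but this follows cleanly once the no-hairpin/edge-traversal correspondence at each subdivision vertex is pinned down.
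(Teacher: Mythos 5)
Your proposal is correct and follows essentially the same route as the paper: the forward direction by inspecting the construction (using the no-self-loop hypothesis to ensure the two edges at each subdivision vertex reach copies of distinct vertices), and the converse by projecting the cyclic sequence of subdivision vertices of $H$ to a closed walk in $G$ that traverses every edge once. Your final paragraph verifying that the construction applied to the recovered Euler tour, with the induced bijection, returns $H$ itself is a small completeness step the paper leaves implicit, but it is the same argument, not a different one.
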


\begin{proof}
Because $G$ has no self-loops, each two consecutive vertices in an Euler tour of $G$ are distinct, and the construction in \cref{thm:many-ham} will produce a tour in which each two consecutive vertices that are copies of vertices in $G$ are copies of distinct vertices; that is, it will not produce a hairpin.

In the other direction, let $H$ be a Hamiltonian cycle in $\subdub{G}$ that has no hairpin. We will show that $H$ must come from this construction. Because $\subdub{G}$ is bipartite, with its subdivision vertices and twin vertices forming the two sets of its bipartition,
$H$ must alternate between subdivision vertices and twin vertices. Its cyclic subsequence of subdivision vertices corresponds, in $G$, to a cyclic sequence of edges, with each two consecutive edges sharing an endpoint. A hairpin, in $H$, corresponds to a cyclic sequence in which both neighbors of some edge share the same endpoint, which fails to be a walk in $G$. If there are no hairpins, then the cyclic sequence of edges in $G$ forms a walk passing through both endpoints of each edge; that is, it is an Euler tour.
\end{proof}

\begin{figure}[t]
\centering\includegraphics[width=\textwidth]{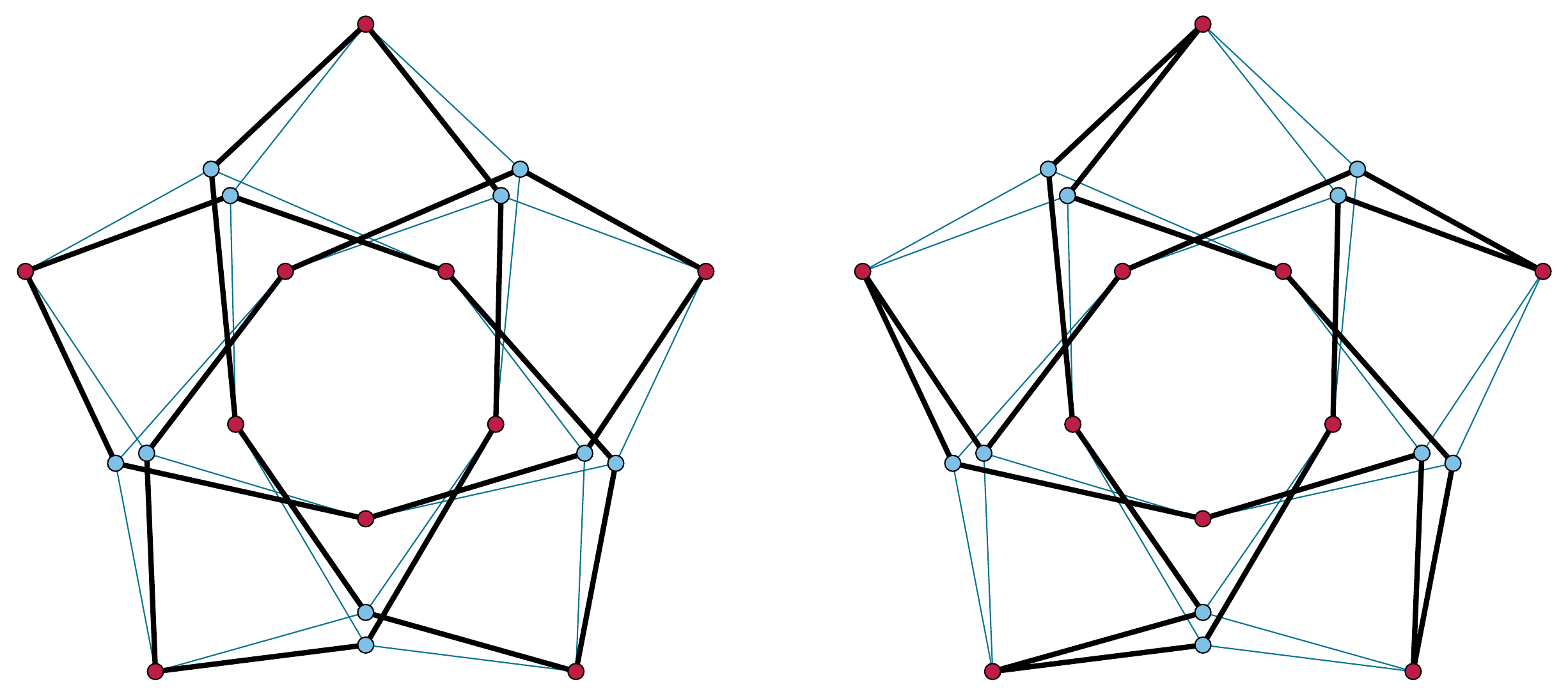}
\caption{The Folkman graph as a subdivided double $\subdub{K_5}$, with two Hamiltonian decompositions: left, without hairpins, and right, with hairpins. Twin vertices are light blue, and are placed close to their twins; subdivision vertices are red. In each case the thick black edges form one Hamiltonian cycle and the complementary thin blue edges form another.}
\label{fig:folkman}
\medskip\hrule
\end{figure}

For instance, in \cref{fig:folkman}, the Hamiltonian cycle drawn as a circle in the left drawing of the Folkman graph has no hairpins, and comes from an Euler tour. The Hamiltonian cycle in the right drawing includes hairpins, and cannot be constructed from an Euler tour.

\section{Hamiltonian decomposition} 
Our main result is the following:

\begin{theorem}
\label{thm:subdub}
Every subdivided double of a connected 4-regular multigraph is Hamiltonian-paired.
\end{theorem}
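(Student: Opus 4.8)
The plan is to establish the stronger fact that complementation of $2$-factors in $\subdub{G}$ preserves the number of cycles; applied to a single Hamiltonian cycle this says its complement is again a single cycle, which together with the Hamiltonicity guaranteed by \cref{thm:many-ham} gives the theorem.

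First I would remove the subdivision vertices from the picture. In any $2$-factor $F$ each subdivision vertex has degree two, so suppressing every subdivision vertex (replacing its two $F$-edges by a single edge joining the two twins it connects) turns $F$ into a $2$-regular multigraph $F^{*}$ on the $2n$ twin vertices, with exactly the same number of cycles as $F$. A subdivision vertex $s_e$ for $e=uv$ now contributes one \emph{link}: either a \emph{cross} joining a twin of $u$ to a twin of $v$, or a \emph{fold} joining the two twins of a single endpoint (precisely a hairpin). Let $\sigma$ be the \emph{twin-swap} involution of $\subdub{G}$, fixing every subdivision vertex and exchanging the two twins of each original vertex. It is an automorphism, so $\sigma(F^{*})$ is isomorphic to $F^{*}$ and has the same number of cycles.

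The heart of the argument is the multigraph identity $(\bar F)^{*}=\sigma(F^{*})$ on the twin vertices, which immediately yields that $\bar F$ and $F$ have equally many cycles. I would check it link by link. At a cross $s_e$, $F$ uses one edge to a twin of $u$ and one to a twin of $v$, the complement uses the edges to the two opposite twins, and $\sigma$ carries the first cross exactly onto the second, so the two multigraphs agree there. At a fold, however, they do \emph{not} agree edge-for-edge: if $F$ folds $s_e$ at $u$, then $\bar F$ uses the complementary pair of edges at $s_e$ and hence folds it at $v$, giving the link $\{v',v''\}$, whereas $\sigma$ fixes the fold $\{u',u''\}$.

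This mismatch at folds is the one real obstacle, and resolving it is where the structure of subdivided doubles enters; the remedy is that the identity need only hold as an equality of edge \emph{multisets}, not link by link. The key local computation is a degree count at each original vertex $u$: the two twins of $u$ carry four edge-ends of $F$, a fold at $u$ uses two of them, a cross uses one, and a fold at the far endpoint uses none, so $2h_u+c_u=4$ where $h_u$ and $c_u$ count the edges at $u$ folded at $u$ and crossed; since $u$ has four incident edges, the number folded at their \emph{other} endpoint is then also $h_u$. Summing over all edges, the multiset of endpoints at which $F$ folds equals the multiset of opposite endpoints, so the fold-links of $\bar F$ form the same multiset as the fold-links of $F$, each of which $\sigma$ fixes. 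Combined with the cross case this proves $(\bar F)^{*}=\sigma(F^{*})$, and the theorem follows. I would finish by confirming that multiple edges and self-loops of $G$ cause no trouble, since these are the only configurations in which a cross and a fold could share endpoints.
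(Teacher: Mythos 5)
Your proof is correct, but it takes a genuinely different route from the paper's. Both arguments project the problem onto one side of the bipartition of $\subdub{G}$, but onto \emph{opposite} sides. The paper contracts toward $G$ itself: it observes that a Hamiltonian cycle $H$ and its complement $\bar{H}$ connect exactly the same pairs of non-hairpin subdivision vertices (in the hairpin case, the four-edge paths $avbv'c$ and $av'dvc$ both join $a$ to $c$), so $H$ and $\bar{H}$ induce literally the identical closed walk in $G$, and connectivity of $\bar{H}$ follows with no counting at all. You instead suppress the subdivision vertices and work on the twin side, where complementation does \emph{not} reproduce the same structure---folds migrate to the opposite endpoint while $\sigma$ fixes them---and you repair this with the counting identity $f_u=h_u$ (at each vertex, the number of incident edges folded there equals the number folded at the far endpoint), concluding equality of edge multisets, hence equality of the 2-regular multigraphs $(\bar{F})^{*}$ and $\sigma(F^{*})$, hence equality of cycle counts. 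Your route buys two things the paper does not state: a stronger theorem (complementation preserves the number of cycles of \emph{every} 2-factor of $\subdub{G}$, not merely of Hamiltonian cycles), and a uniform treatment of the case $\subdub{B_2}=\bar{\bar C}_4$, which the paper must handle separately because its case analysis at a twin pair needs the two paths not to share both endpoints, forcing an assumption of at least eight vertices; your per-subdivision-vertex analysis never uses that. Two details to tighten when writing this up: the count $2h_u+c_u=4$ must be adjusted at a vertex carrying self-loops (each loop contributes exactly two edge-ends at $\{u',u''\}$, so both sides of the identity drop by twice the number of loops and $f_u=h_u$ survives), and the deferred loop verification is genuinely needed since a loop's subdivision vertex has doubled edges to the twins---but both of its configurations (two parallel edges to one twin, or one edge to each) satisfy $(\bar{F})^{*}=\sigma(F^{*})$ link by link, so nothing breaks.
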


\begin{proof}
We consider separately the special case $\subdub{B_2}$, where $B_2$  is a bouquet of two loops on a single vertex (considering each loop to add two to the degree of its vertex). Its subdivided double is $\subdub{B_2}=\bar{\bar C}_4$, a four-cycle with each edge doubled. In $\bar{\bar C}_4$, every Hamiltonian cycle uses one copy of each doubled edge. The complement of a Hamiltonian cycle consists of the other copy of each doubled edge, forming another Hamiltonian cycle. Therefore, every Hamiltonian cycle is complementary to another Hamiltonian cycle. In all remaining cases we may assume without loss of generality that the underlying multigraph $G$ has more than one vertex and therefore that its subdivided double $\subdub{G}$ has at least eight vertices: at least four twin vertices and at least four subdivision vertices.

By \cref{thm:many-ham} the subdivided double of a connected 4-regular graph or multigraph is Hamiltonian so it remains to prove that the complement of every Hamiltonian cycle is another Hamiltonian cycle. Therefore, let $\subdub{G}$ be the subdivided double of a given connected 4-regular graph or multigraph $G$, and let $H$ be any Hamiltonian cycle in $\subdub{G}$. For each pair of twin vertices $v$ and $v'$ in $\subdub{G}$, let $a$, $b$, $c$, and $d$ be the four subdivision vertices that neighbor $v$ and $v'$, and consider the two two-edge paths by which $H$ passes through $v$ and $v'$. These two paths may be disjoint, or they may share one subdivision point as a common endpoint. (They cannot share two common endpoints, because that would form a four-vertex cycle and we are assuming that $H$ is a Hamiltonian cycle of a graph with more than four vertices.)
\begin{itemize}
\item If the pair of two-edge paths through twin vertices are disjoint (\cref{fig:ham-nbhd}, left), we may assume (by permuting the labels of the vertices as necessary) that they are $avb$ and $cv'd$. They do not form any hairpins. In this case, the edges in the neighborhood of $v$ and $v'$ that are not part of $H$ again form two disjoint paths, $av'b$ and $cvd$, again without hairpins.
\item If the pair of two-edge paths through twin vertices have a common endpoint, we may assume (by permuting the labels of the vertices as necessary) that they are $avb$ and $bv'c$, with shared endpoint $b$. They connect to form a single four-edge path $avbv'c$, with the middle two edges forming a single hairpin and the outer two edges not part of any hairpin (\cref{fig:ham-nbhd}, right). In this case, the edges in the neighborhood of $v$ and $v'$ that are not part of $H$ again form a four-edge path $av'dvc$ with a single hairpin formed by its middle two edges.
\end{itemize}
\begin{figure}[t]
\centering\includegraphics[scale=0.3]{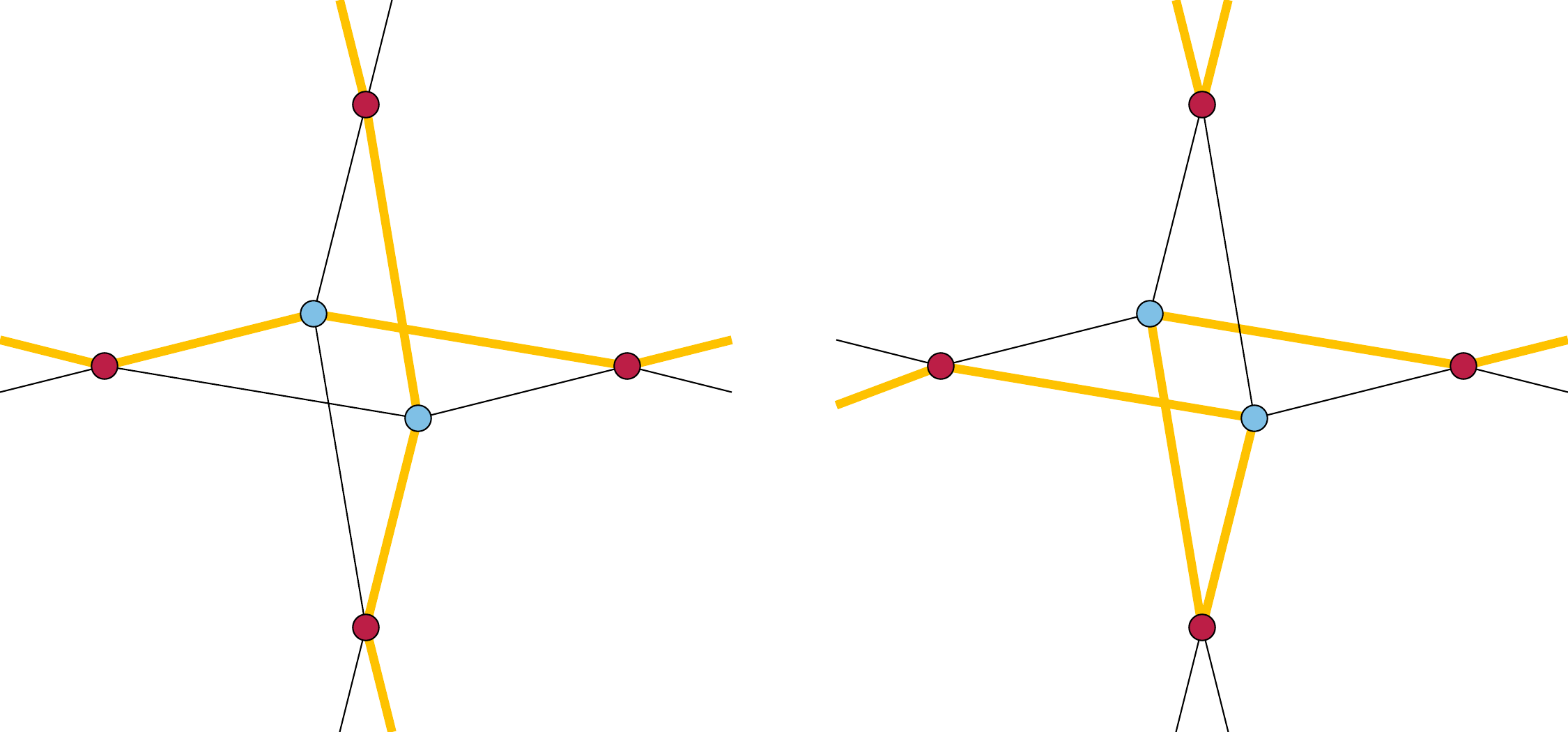}
\caption{Possible neighborhoods of twin vertices in a Hamiltonian cycle of a subdivided double. Left: No hairpins. Right: The Hamiltonian cycle and its complement each have a single hairpin.}
\label{fig:ham-nbhd}
\medskip\hrule
\end{figure}
Thus, in each case, the complement of $H$ has the same local structure as $H$ in the neighborhood of each pair of twin vertices: either $H$ and its complement both have a single hairpin, or they both have no hairpin. The complement of $H$ is automatically 2-regular, because it is the complement of a Hamiltonian cycle in a 4-regular graph. We must prove that it forms a connected 2-regular graph, and so, is a Hamiltonian cycle.

To do so, consider the cyclic sequence of subdivision vertices in $\subdub{G}$ that are not part of hairpins in $H$, and let $C$ be the corresponding cyclic sequence of edges in $G$. In each of the two neighborhoods in \cref{fig:ham-nbhd}, each path in $H$ connects two of these non-hairpin subdivision vertices, so $C$ forms a closed walk in $G$. It visits all vertices  in $G$, either once (for the pairs of twins with a hairpin in their neighborhood) or twice (for the pairs of twins with no hairpin). Applying the same construction to the complement $\bar H$ of $H$ produces a closed walk $C'$ in $G$ in the same way, and connects the same pairs of subdivision vertices to each other, so it produces exactly the same cyclic sequence of edges in $G$: that is, $C$ and $C'$ are identical. Because $\bar H$ maps to a closed walk $C'$ that visits all vertices in $G$, $\bar H$  forms a single cycle that visits all twin pairs in $\subdub{G}$, rather than forming two or more cycles. $\bar H$  must also visit every subdivision vertex in $\subdub{G}$, because each of these vertices has degree four and is touched by two edges of $H$, so its remaining two edges must be in $\bar H$. Because $\bar H$ is a single cycle that visits every vertex, it is a Hamiltonian cycle.
\end{proof}

Although this proof uses heavily the local structure of subdivided doubles, it needs also the global structure of these graphs. For instance, the proof applies to the circulant graphs $C_{4n}^{1,2n-1}$, which are subdivided doubles. However, the circulant graphs $C_{4n+2}^{1,2n}$ have the same local structure (they have the same radius-$r$ neighborhoods of vertices for small $r$) but are not subdivided doubles and are not Hamiltonian-paired. In these graphs, the Hamiltonian cycle formed by the edges connecting consecutive vertices is complementary to a pair of disjoint cycles formed by the edges that connect vertices $2n$ steps apart. The proof of \cref{thm:subdub} uses the global partition of vertices in $\subdub{G}$ into twins and subdivision vertices, and no consistent global partition of this type can be defined for~$C_{4n+2}^{1,2n}$.
 
\bibliographystyle{plainurl}
\bibliography{hampan}

\end{document}